\newtheorem{thm}{Theorem}[section]
\newtheorem{conj.}[thm]{Conjecture}
\newtheorem{lem}[thm]{Lemma}
\newtheorem{prop}[thm]{Proposition}
\theoremstyle{definition}
\newtheorem{defn}[thm]{Definition}
\theoremstyle{remark}
\newtheorem{rem}[thm]{Remark}
\numberwithin{equation}{section}
\newcommand{\NN}{\mathbb N}
\newcommand{\kf}{$K$-frames}
\newcommand{\hs}{Hilbert spaces}
\begin{document}

\title[Controlled $K$-frames and  their invariance under ...]
{Controlled $K$-frames and their invariance under Compact Perturbation }%
\author[A. Rahimi$^1$, Sh. Najafzadeh$^{2}$ and M. Nouri$^{3}$]{A. Rahimi$^1$, Sh. Najafzadeh$^{2}$ and M. Nouri$^{3}$ }
\address{$^1$Department of Mathematics, University of Maragheh, Maragheh, Iran.}

\email{rahimi@maragheh.ac.ir}
\address{ $^{2}$Department of Mathematics, Payame Noor University, Iran.}
\email{najafzadeh1234@yahoo.ie}
\address{  $^{3}$Department of Mathematics, Payame Noor University, Iran.}
\email{mohammadnoori562@yahoo.com}

\subjclass[2000]{Primary 42C40; Secondary 41A58, 47A58,.}
\keywords{Bessel sequence, Controlled frame, Frame, $K$-frame, Perturbation.}

\begin{abstract}
\kf\  were recently introduced by L. G\v{a}vruta in \hs\ to study atomic systems with respect to bounded linear operator. Also controlled frames have been recently introduced by Balazs, Antoine and Grybos in \hs\ to improve the numerical efficiency of interactive algorithms for inverting the frame operator. In this manuscript, the concept of controlled $K$-frames  will be studied  and the stability of Controlled $K$-frames  under compact perturbation will be discussed.
\end{abstract}
\maketitle
\section{Introduction}
Frames in \hs\ were first proposed by Duffin and Schaeffer to deal with nonharmonic Fourier series in 1952 \cite{Duff} and widely studied from 1986 since the great work by Daubechies et al.\cite{Daub}.
Now frames play an important role not only in the theoretics but also in many kinds of applications and have been widely applied in signal processing \cite{Ferr}, sampling \cite{Eldar1,Eldar2}, coding and communications \cite{strh}, filter bank theory \cite{Bolcskei}, system modeling \cite{Duday} and so on.
For special applications many other types of frames were proposed, such as the fusion frames \cite{Casazza1,Casazza2} to deal with hierarchical data processing, $g$-frames \cite{sun} by Sun to deal with all existing frames as united object, oblique dual frames \cite{Eldar1} by Elder to deal with sampling reconstructions, and etc. \par
The notion of
\kf\    were recently introduced by L. G\v{a}vruta to study the atomic systems with respect to a bounded linear operator $K$ in \hs. \kf\   are more general than ordinary frames in sense that the lower frame bound only holds for the elements in the range of the  $K$, where $K$ is a bounded linear operator in a separable Hilbert Space $H$.\par
One of the newest generalization of frames is controlled frames. Controlled frames have been introduced recently to improve the numerical efficiency of interactive algorithms for inverting the frame operator on abstract \hs\ \cite{Balazs}, however they have been used earlier in \cite{Bogdanova} for spherical wavelets. This concept generalized for fusion frames in \cite{khos} and for $g$-frames in \cite{Rahimi}.

In this paper, the concept of controlled $K$-frame will be defined and it will be shown that any controlled $K$-frame is equivalent to a $K$-frame, finally we will discuss the stability of compact perturbation for controlled $K$-frames.

Throughout this paper $H$ is a separable Hilbert space, $B(H)$ is the family of all linear operators on $H$, $GL(H)$ denotes the set of all bounded linear operators which have bounded inverses and $K\in B(H)$.

It is easy to see that if $S,T\in GL(H)$, then $T^*,T^{-1}$ and $ST$ are also in $GL(H)$. Let $GL^+(H)$ be the set of all positive operators in $GL(H)$.

A bounded operator $T\in B(H)$ is called positive (respectively, non-negative), if $\langle Tf,f\rangle>0$ for all $f\ne0$ (respectively, $\langle Tf,f\rangle\ge0$ for all $f$).
Every non-negative operator is clearly self-adjoint.
If $A\in B(H)$  is non-negative, then there exists a unique non-negative operator $B$ such that $B^2=A$. Furthermore $B$ commutes with every operator that commutes with $A$.
This will be denoted by $B=A^{\frac{1}{2}}$. Let $B^+(H)$ be the set of positive operators on $H$. For self-adjoint operators $T_1$ and $T_2$, the notation $T_1\leq T_2$ or $T_2-T_1\geq 0$ means
$$ \langle T_1 f,f\rangle\leq \langle T_2f,f\rangle \quad ,\forall f\in H.  $$
The following result is needed in the sequel, but straightforward to prove:
\begin{prop}\cite{ole}\label{prp:equs}
Let $T:\ H\to H$ be a linear operator. Then the following condition are equivalent:
\begin{enumerate}
\item There exist $m>0$ and $M<\infty$, such that $mI\le T\le MI$;
\item $T$ is positive and there exist $m>0$ and $M<\infty$, such that $m\|f\|^2\le \|T^{\frac{1}{2}}f\|^2\le M\|f\|^2$ for all $f\in H$;
\item $T$ is positive and $T^{\frac{1}{2}}\in GL(H)$;
\item There exists a self-adjoint operator  $A\in GL(H)$, such that \[A^2=T;\]
\item $T\in GL^+(H)$;
\item There exist constants $m>0$ and $M<\infty$ and operator\\ $C\in GL^+(H)$, such that $m'C\le T\le M'C$;
\item For every $C\in GL^+(H)$, there exist constants $m>0$ and\\ $M<\infty$, such that $m'C\le T\le M'C$.
\end{enumerate}
\end{prop}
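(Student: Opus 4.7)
The plan is to establish the seven equivalences via the cycle
$(1)\Rightarrow(2)\Rightarrow(3)\Rightarrow(4)\Rightarrow(5)\Rightarrow(1)$,
and then deduce $(1)\Leftrightarrow(6)\Leftrightarrow(7)$ using the equivalence $(1)\Leftrightarrow(5)$ applied to $C$.

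For $(1)\Rightarrow(2)$, positivity follows from $m>0$, and the identity $\langle Tf,f\rangle=\|T^{1/2}f\|^{2}$ converts the operator inequality directly into the stated norm bounds. For $(2)\Rightarrow(3)$, the lower bound $m\|f\|^{2}\le\|T^{1/2}f\|^{2}$ shows that $T^{1/2}$ is bounded below, hence injective with closed range; since $T^{1/2}$ is self-adjoint its range is dense, so $T^{1/2}$ is bijective, and the lower bound gives boundedness of the inverse, i.e.\ $T^{1/2}\in GL(H)$. The step $(3)\Rightarrow(4)$ is immediate by taking $A=T^{1/2}$, which is self-adjoint (as the positive square root) and lies in $GL(H)$ by hypothesis. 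For $(4)\Rightarrow(5)$, from $T=A^{2}=A^{*}A$ we see $T$ is positive, and $T^{-1}=(A^{-1})^{2}$ is bounded because $A^{-1}\in B(H)$.

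The step I expect to be the main obstacle is $(5)\Rightarrow(1)$, since it requires a genuine lower bound rather than a formal manipulation. The upper bound is routine: $\langle Tf,f\rangle\le\|T\|\,\|f\|^{2}$, so we may take $M=\|T\|$. For the lower bound, I would argue that since $T^{-1}\in B^{+}(H)$, one has
\[
\|f\|^{2}=\langle T^{-1/2}Tf,T^{-1/2}f\rangle\le \|T^{-1/2}\|\,\|f\|\,\|T^{-1/2}Tf\|,
\]
or, more cleanly, use $\|f\|^{2}=\|T^{-1/2}T^{1/2}f\|^{2}\le\|T^{-1/2}\|^{2}\langle Tf,f\rangle$, which gives $mI\le T$ with $m=\|T^{-1/2}\|^{-2}=\|T^{-1}\|^{-1}$. (Equivalently, by the spectral theorem, $\sigma(T)\subset[\|T^{-1}\|^{-1},\|T\|]\subset(0,\infty)$, from which $mI\le T\le MI$ follows at once.) This completes the first cycle.

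For the remaining statements, observe that $(1)$ is the special case $C=I$ of $(6)$, and $(7)\Rightarrow(6)$ is trivial (specialise to any fixed $C$, e.g.\ $C=I$). To close the loop I would argue as follows. Given $(1)$ and an arbitrary $C\in GL^{+}(H)$, the already-proved equivalence $(1)\Leftrightarrow(5)$ applied to $C$ yields constants $0<m_{1}\le M_{1}<\infty$ with $m_{1}I\le C\le M_{1}I$; combining this with $mI\le T\le MI$ gives $(m/M_{1})\,C\le T\le (M/m_{1})\,C$, which is $(7)$. Conversely, assuming $(6)$ and picking the same spectral constants for $C$, we get $m'm_{1}I\le m'C\le T\le M'C\le M'M_{1}I$, which is $(1)$. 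This closes the chain and establishes the equivalence of all seven conditions.
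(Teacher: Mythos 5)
Your proof is correct, but there is nothing in the paper to compare it against: the authors explicitly skip the proof, calling the result ``straightforward to prove'' and citing Christensen's book, so your argument stands on its own. The cycle $(1)\Rightarrow(2)\Rightarrow(3)\Rightarrow(4)\Rightarrow(5)\Rightarrow(1)$ is the standard route and every step checks out: in $(2)\Rightarrow(3)$ the bounded-below-plus-self-adjoint argument (closed range, dense range, hence bijective with bounded inverse) is exactly right, and in $(5)\Rightarrow(1)$ the identity $\|f\|^{2}=\|T^{-1/2}T^{1/2}f\|^{2}\le\|T^{-1/2}\|^{2}\langle Tf,f\rangle$ gives the sharp lower bound $m=\|T^{-1}\|^{-1}$. (Your first displayed inequality in that step is garbled --- it bounds $\|f\|^{2}$ by a product involving $\|T^{-1/2}Tf\|$, which does not by itself yield a quadratic-form bound --- but you discard it yourself in favour of the clean version, so no harm is done.) Your treatment of $(6)$ and $(7)$ is the right idea and arguably the main point of the proposition: since any $C\in GL^{+}(H)$ itself satisfies $(5)$, the already-proved implication $(5)\Rightarrow(1)$ yields $m_{1}I\le C\le M_{1}I$, and sandwiching transfers bounds in both directions; you also silently repaired the statement's typo (constants declared as $m,M$ but used as $m',M'$). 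Two minor points you pass over, as does the paper: the hypothesis says only ``linear operator,'' so strictly one should observe that the ordering in $(1)$ presupposes $T$ self-adjoint and everywhere defined, whence bounded by Hellinger--Toeplitz, before $T^{1/2}$ can be invoked; and $(6)\Rightarrow(1)$ likewise uses the implicit self-adjointness of $T$. Neither affects the substance of your argument.
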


It is well-known that not all bounded operators $U$ on a Hilbert space $H$
are invertible: an operator $U$ needs to be injective and surjective in order
to be invertible. For doing this, one can use right-inverse operator.  The following lemma shows that if an operator $U$ has closed range,
there exists a "right-inverse operator" $U^\dagger$ in the following sense:

\begin{lem}\label{111}\cite{ole}
Let $H_1$ and $H_2$ be Hilbert spaces and suppose that $U : H_2\to H_1$
is a bounded operator with closed range $R_U$. Then there exists a bounded
operator $U^\dagger :H_1\to H_2$ for which $$ UU^\dagger x=x\quad,\forall x\in R_U.$$
\end{lem}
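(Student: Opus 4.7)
The plan is to construct $U^\dagger$ as the classical Moore--Penrose type right inverse, using the closed-range hypothesis to get a bounded inverse on a restricted domain.

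First I would split the domain of $U$ orthogonally as $H_2 = \ker U \oplus (\ker U)^\perp$ and consider the restriction $\tilde U := U|_{(\ker U)^\perp} : (\ker U)^\perp \to R_U$. This restriction is injective by construction, and surjective onto $R_U$ because any $y = Uz$ can be written with $z \in (\ker U)^\perp$ (throw away the component in $\ker U$). Since $R_U$ is closed in $H_1$, it is itself a Hilbert space, and $(\ker U)^\perp$ is a closed subspace of $H_2$, hence Hilbert. Thus $\tilde U$ is a bounded bijection between two Hilbert spaces.

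Next I would invoke the bounded inverse theorem (a consequence of the open mapping theorem) to conclude that $\tilde U^{-1} : R_U \to (\ker U)^\perp$ is itself a bounded linear operator. Then, using that $R_U$ is closed, decompose $H_1 = R_U \oplus R_U^\perp$ and let $P : H_1 \to R_U$ denote the orthogonal projection, which is bounded. Define
\[
U^\dagger := \tilde U^{-1} \circ P : H_1 \to H_2,
\]
viewing $(\ker U)^\perp$ as a subspace of $H_2$. This is bounded as a composition of bounded operators. For $x \in R_U$, we have $Px = x$ and therefore
\[
U U^\dagger x = U\bigl(\tilde U^{-1} x\bigr) = \tilde U \tilde U^{-1} x = x,
\]
which is exactly the right-inverse identity required.

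The only delicate point in the argument is justifying that $\tilde U^{-1}$ is bounded; everything else is bookkeeping with orthogonal projections. This is precisely where the closed-range hypothesis is essential, since without it the restriction $\tilde U$ need not be a bijection onto a Hilbert space and the open mapping theorem would not apply. I would therefore emphasize that step and state clearly that closedness of $R_U$ is used both to make $R_U$ a Hilbert space (for the inverse mapping theorem) and to make the projection $P$ well-defined and bounded.
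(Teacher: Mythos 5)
Your construction is correct: the paper states this lemma without proof, citing Christensen's book \cite{ole}, and your argument is precisely the standard one given there --- restrict $U$ to $(\ker U)^\perp$, invoke the bounded inverse theorem on the bijection onto the closed (hence complete) range $R_U$, and compose with the orthogonal projection $P:H_1\to R_U$. You also correctly identify where closedness of $R_U$ enters (completeness for the inverse mapping theorem and well-definedness of $P$), so nothing is missing.
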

The operator $U^\dagger$ in the  Lemma \ref{111} is called the
pseudo-inverse of $U$. In the literature, one will often see the pseudo-inverse
of an operator $U$ with closed range defined as the unique operator $U^\dagger$
satisfying that
$$
N_{U^\dagger}=R_U^\perp,\quad R_{U^\dagger}=N_U^\perp,\quad UU^\dagger x=x\quad,\forall x\in R_U.
$$\par
A sequence $\{f_i\}_{i\in I}$ in $H$ is called a frame for $H$, if  there exist constants $0<A\le B<\infty$ such that
\[A\|f\|^2\le\sum_{i\in I}|\langle f,f_i\rangle|^2\le B\|f\|^2\ ,\ \forall f\in H.\]
If $A=B$, then $\{f_i\}_{i\in I}$ is called a tight frame and if $A=B=1$, then it is called a Parseval frame.
A Bessel sequence  $\{f_i\}_{i\in I}$ is only required to fulfill the upper frame bound estimate but not necessarily the lower estimate.

The frame operator $Sf=\sum_{i\in I}\langle f,f_i\rangle f_i$ associated with a frame $\{f_i\}_{i\in I}$ is a bounded, invertible and positive operator on $H$. This provides the reconstruction formulas
\[f=S^{-1}Sf=\sum_{i\in I}\langle f,f_i\rangle S^{-1}f_i=\sum_{i\in I}\langle f,S^{-1}f_i\rangle f_i,\forall f\in H.\]

Furthermore, $AI\le S\le BI\ \text{and}\ B^{-1}I\le S^{-1}\le A^{-1}I$.
\begin{defn}
Let $C\in GL(H)$. A frame controlled by the operator $C$ or $C$-controlled frame is a family of vectors $\{f_i\}_{i\in I}$ in $H$, such that there exist constants $0<m_C\le M_C<\infty$, verifying
\[m_C\|f\|^2\le\sum_{i\in I}\langle f,f_i\rangle\langle Cf_i,f\rangle\le M_C\|f\|^2\ ,\ \forall f\in H.\]
\end{defn}
The controlled frame operator $S$ is defined by
\[Sf=\sum_{i\in I}\langle f,f_i\rangle Cf_i, \forall f\in H.\]
Because of the higher generality of $K$-frames, some properties of ordinary frames can not hold for $K$-frames, such as the frame operator of a $K$-frame is not an isomorphism. For more differences between $K$-frames and ordinary frames, we refer to \cite{12}.
\begin{defn}\label{def:kframe}
Let $K\in B(H)$.
A sequence $\{f_n\}_{n=1}^\infty\subset H$  is called a $K$-frame for $H$, if there exist constants $A, B>0$ such that
\begin{equation}\label{eq:kframe}
A\|K^*f\|^2\le \sum_{n=1}^{\infty}|\langle f,f_n\rangle|^2\le B\|f\|^2,~~\forall f\in H.
\end{equation}
we call $A$ and $ B$  lower and upper frame bound for $K$-frame $\{f_n\}_{n=1}^\infty\subset H$, respectively if only the right inequality of the above inequality holds, $\{f_n\}_{n=1}^\infty\subset H$ is called a $K$-Bessel sequence.
\end{defn}
\begin{rem}
If $K=I$, then $K$-frame are just the ordinary frame.
\end{rem}
\begin{rem}
In the following, we will assume that $R(K)$ is closed, since this can assure that the pseudo-inverse $K^\dagger$ of $K$ exists.
\end{rem}
\begin{defn}\label{def:atom}\cite{Gavruta}
Let $K\in B(H)$.
A sequence $\{f_n\}_{n=1}^\infty\subset H$  is called an atomic system for $K$, if the following conditions are satisfied:
\begin{enumerate}
\item $\{f_n\}_{n=1}^\infty$ is a Bessel sequence.
\item For any $x\in H$, there exists $a_x=\{a_n\}\in l^2$ such that
\[Kx=\sum_{n=1}^{\infty}a_nf_n\]
where $\|a_x\|_{l^2}\le C\|x\|$, $C$ is positive constant.
\end{enumerate}
\end{defn}
Suppose that $\{f_n\}_{n=1}^\infty$ is a $K$-frame for $H$. Obviously it is a Bessel sequence, so we can define the following operator
\[T:l^2\to H,\quad Ta=\sum_{n=1}^{\infty}a_nf_n,\quad a=\{a_n\}\in l^2,\]
it follows that
\[T^*:H\to l^2\]
\[T^*f=\{\langle f,f_n\rangle\}_{n=1}^\infty,\forall f\in H.\]
Let $S=TT^*$, we obtain
\[Sf=\sum_{n=1}^{\infty}\langle f,f_n\rangle f_n  \quad, \forall f\in H. \]
we call $T,\ T^*$ and $\ S$ the synthesis operator, analysis operator and frame operator for $K$-frame $\{f_n\}_{n=1}^\infty$, respectively.

\begin{thm}
Let $\{f_n\}_{n=1}^\infty$ be a Bessel sequence in $H$. Then $\{f_n\}_{n=1}^\infty$ is a $K$-frame for $H$, if and only if there exists $A>0$ such that \[S\ge AKK^*,\] where $S$ is the frame operator for $\{f_n\}_{n=1}^\infty$.
\end{thm}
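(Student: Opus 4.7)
The plan is to show that the inequality $S \ge AKK^{*}$ is, after taking inner products with an arbitrary $f\in H$, literally the lower $K$-frame bound in disguise; hence the equivalence reduces to unpacking two elementary identities.

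First I would record the two identities that make the translation automatic. Since $S$ is the frame operator of $\{f_n\}_{n=1}^\infty$, a direct computation (which is built into the definition of $S$ as $TT^{*}$ given just above the theorem) yields
\[
\langle Sf,f\rangle \;=\; \sum_{n=1}^{\infty}|\langle f,f_n\rangle|^{2},\qquad \forall f\in H.
\]
Likewise, by the definition of the adjoint,
\[
\langle KK^{*}f,f\rangle \;=\; \langle K^{*}f,K^{*}f\rangle \;=\; \|K^{*}f\|^{2},\qquad \forall f\in H.
\]

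For the forward direction I would assume $\{f_n\}_{n=1}^\infty$ is a $K$-frame with lower bound $A>0$. Then the lower inequality in \eqref{eq:kframe} says $A\|K^{*}f\|^{2}\le \sum_{n=1}^{\infty}|\langle f,f_n\rangle|^{2}$ for all $f\in H$, and applying the two identities this is exactly $\langle AKK^{*}f,f\rangle\le\langle Sf,f\rangle$ for every $f\in H$. Because $S-AKK^{*}$ is self-adjoint, this inner-product inequality is the operator inequality $S\ge AKK^{*}$ by the convention stated in the preliminaries.

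For the converse I would assume there is $A>0$ with $S\ge AKK^{*}$. Reading the inequality against an arbitrary $f\in H$ and applying the same two identities in reverse gives $A\|K^{*}f\|^{2}\le \sum_{n=1}^{\infty}|\langle f,f_n\rangle|^{2}$, i.e.\ the lower $K$-frame bound. Since $\{f_n\}_{n=1}^\infty$ is assumed Bessel, the upper bound in \eqref{eq:kframe} is already available, so $\{f_n\}_{n=1}^\infty$ is a $K$-frame. There is no real obstacle here; the only point to flag is that we are using the equivalence between the operator inequality $T_1\le T_2$ and the pointwise inequality of quadratic forms, which is precisely the convention adopted in the introduction.
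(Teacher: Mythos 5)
Your proposal is correct and follows essentially the same route as the paper: both arguments translate the lower $K$-frame inequality into the operator inequality $S\ge AKK^{*}$ via the identities $\langle Sf,f\rangle=\sum_{n=1}^{\infty}|\langle f,f_n\rangle|^{2}$ and $\langle KK^{*}f,f\rangle=\|K^{*}f\|^{2}$, together with the quadratic-form convention for $T_1\le T_2$. Your version is in fact slightly more careful than the paper's, since you explicitly invoke the Bessel hypothesis to supply the upper bound in the converse direction.
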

\begin{proof} The sequence
$\{f_n\}_{n=1}^\infty$ is a $K$-frame for $H$ with frame bounds $A, B$ and frame operator $S$ if and only if
\begin{equation}\label{eq:th25}
A\|K^*f\|^2\le\sum_{K=1}^{\infty}|\langle f,f_n\rangle |^2=\langle Sf,f\rangle\le B\|f\|^2\ ,~\forall f\in H,
\end{equation}
that is
\[\langle AKK^*f,f\rangle\le\langle Sf,f\rangle\le \langle Bf,f\rangle\ ,~\forall f\in H.\]
so the conclusion holds.
\end{proof}
\begin{rem}
Frame operator of a \kf\ is not invertible on $H$ in general, but we can show that it is invertible on the subspace $R(K)\subset H$. In fact, since $R(K)$ is closed, there exists a pseudo-inverse $K^\dagger$ of $K$, such that
$KK^\dagger f=f\ ,~\forall f\in R(K)\ ,~namely\  KK^\dagger|_{R(K)}=I_{R(K)}$, so we have $I_{R(K)}^*=(K^\dagger|_{R(K)})^*K^*$. Hence for any $f\in R(K)$, we obtain
\[\|f\|=\|(K^\dagger|_{R(K)})^*K^*f\|\le\|K^\dagger\|.\|K^*f\|,\]
that is, $\|K^*f\|^2\ge \|K^\dagger\|^{-2}\|f\|^2$. Combined with (\ref{eq:th25}) we have
\begin{equation}
\langle Sf,f\rangle\ge A\|K^*f\|^2\ge A\|K^\dagger\|^{-2}\|f\|^2\ ,~\forall f\in R(K).
\end{equation}
So, from the definition of $K$-frame we have
\begin{equation}
A\|K^\dagger\|^{-2}\|f\|\le\|Sf\|\le B\|f\|\ ,~\forall f\in R(K),
\end{equation}
which implies that $S:\ R(K)\to S(R(K))$ is a homeomorphism, furthermore, we have
\[B^{-1}\|f\|\le \|S^{-1}f\|\le A^{-1}\|K^\dagger\|^2\|f\|\ ,~\forall f\in S(R(K)).\]
\end{rem}

\section{Controlled $K$-frames}
Controlled frames for spherical wavelets were introduced in \cite{Bogdanova} to get a numerically
more efficient approximation algorithm and the related theory. For general frames, it was
developed in \cite{Balazs}. For getting a numerical solution of a linear system of equations $Ax = b$,
one can solve the system of equations $PAx = Pb$, where $P$ is a suitable preconditioning
matrix. It was the main motivation for introducing controlled frames in \cite{Bogdanova}. Controlled frames extended to $g$-frames in \cite{Rahimi} and for fusion frames in \cite{khos}.
In this section, the concept of controlled frames and controlled Bessel sequences will be extended to $K$-frames and it will be shown  that controlled $K$-frames are equivalent $K$-frames.

\begin{defn}
Let $C\in GL^+(H)$ ($C>0$) and let $CK=KC$. The family $\{f_n\}_{n=1}^\infty$ is called  $C$-controlled $K$-frame for $H$, if $\{f_n\}_{n=1}^\infty$ is a $K$-Bessel sequence and there exist constants $A>0$ and $B<\infty$ such that
\[A\|C^{\frac{1}{2}}K^*f\|^2\le\sum_{n=1}^{\infty}\langle f,f_n\rangle\langle f,Cf_n\rangle\le B\|f\|^2\ ,~\forall f\in H.\]
The constants $A$ and $B$ are called $C$-controlled $K$-frame bounds. If $C=I$, the $C$-controlled $K$-frame  $\{f_n\}_{n=1}^\infty$ is a $K$-frame for $H$ with bounds $A$ and $B$.

If the second part of the above inequality holds, it  called  $C$-controlled $K$-Bessel sequence with bound $B$.
\end{defn}

The proof of the following lemmas is straightforward.

\begin{lem}
Let $C>0$ and  $C\in GL^+(H)$. The $K$-Bessel sequence $\{f_n\}_{n=1}^\infty$ is $C$-controlled $K$-Bessel sequence if and only if there exists constant $B<\infty$ such that
\[\sum_{n=1}^{\infty}\langle f,f_n\rangle\langle f,Cf_n\rangle\le B\|f\|^2\ ,~\forall f\in H.\]
\end{lem}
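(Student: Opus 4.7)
The plan is that this lemma is essentially a restatement of the definition, and the two directions can be handled almost by inspection.

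For the forward direction $(\Rightarrow)$, I would simply observe that if $\{f_n\}_{n=1}^\infty$ is a $C$-controlled $K$-Bessel sequence, then by the defining inequality (the ``second part'' in the definition of a $C$-controlled $K$-frame), the desired upper bound $\sum_{n=1}^{\infty}\langle f,f_n\rangle\langle f,Cf_n\rangle\le B\|f\|^2$ holds for some $B<\infty$.

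For the reverse direction $(\Leftarrow)$, I would use the hypothesis that $\{f_n\}_{n=1}^\infty$ is already a $K$-Bessel sequence together with the assumed upper bound. These are precisely the two conditions appearing in the definition of a $C$-controlled $K$-Bessel sequence, so the conclusion follows. The only point worth pausing over is the well-definedness (absolute convergence) of the scalar series $\sum\langle f,f_n\rangle\langle f,Cf_n\rangle$. I would supply this by Cauchy--Schwarz: since $C\in GL^+(H)$ is self-adjoint we have $|\langle f,Cf_n\rangle|=|\langle Cf,f_n\rangle|$, so using the $K$-Bessel bound $D$ applied once to $f$ and once to $Cf$,
\[
\sum_{n=1}^{\infty}\bigl|\langle f,f_n\rangle\langle f,Cf_n\rangle\bigr|\le\Bigl(\sum_n|\langle f,f_n\rangle|^2\Bigr)^{1/2}\Bigl(\sum_n|\langle Cf,f_n\rangle|^2\Bigr)^{1/2}\le D\,\|C\|\,\|f\|^2,
\]
which guarantees that the series converges absolutely for every $f\in H$ and hence that the stated inequality is meaningful.

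The main obstacle here is nonexistent in any real sense; as the authors say, the result is straightforward. The only modest subtlety is the verification of convergence of the mixed series, which is a one-line consequence of Cauchy--Schwarz and the Bessel bound together with self-adjointness of $C$. No property of $K$ beyond what already appears in the definition of a $K$-Bessel sequence, and no use of the commutation $CK=KC$, is needed.
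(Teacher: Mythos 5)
Your proposal is correct and matches the paper exactly: the paper offers no written proof for this lemma, stating only that ``the proof of the following lemmas is straightforward,'' and your argument is precisely the straightforward unpacking of the definition that the authors intend, since a $C$-controlled $K$-Bessel sequence is by definition a sequence satisfying the upper inequality. Your Cauchy--Schwarz verification that the mixed series $\sum_{n}\langle f,f_n\rangle\langle f,Cf_n\rangle$ converges absolutely (using self-adjointness of $C\in GL^+(H)$ and the $K$-Bessel bound) is a small but welcome addition that the paper leaves implicit.
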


\begin{lem}
Let $C\in GL^+(H)$. A sequence $\{f_n\}_{n=1}^\infty\in H$ is a $C$-controlled Bessel sequence for $H$ if and only if the operator
\[L_C:H\to H\ ,~L_Cf=\sum_{n=1}^{\infty}\langle f,f_n\rangle Cf_n,\quad \forall f\in H.\]
is well defined and there exists constant $B<\infty$ such that
\[\sum_{n=1}^{\infty}\langle f,f_n\rangle\langle f,Cf_n\rangle\le B\|f\|^2\ ,~\forall f\in H.\]
\end{lem}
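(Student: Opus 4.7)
The reverse implication is immediate: the right-hand side merely adds the well-definedness of $L_C$ to the defining inequality of a $C$-controlled Bessel sequence. The content of the lemma therefore lies in the forward direction --- one must show that the scalar inequality $\sum_n\langle f,f_n\rangle\langle f,Cf_n\rangle\le B\|f\|^2$ forces the vector series $L_Cf=\sum_n\langle f,f_n\rangle Cf_n$ to converge in $H$ for every $f\in H$.

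The plan is to factor $L_Cf=C\bigl(\sum_n\langle f,f_n\rangle f_n\bigr)=C\,Sf$, where $Sf$ denotes the formal synthesis $\sum_n\langle f,f_n\rangle f_n$ of $\{f_n\}$. Because $C\in GL^+(H)$ is bounded and invertible with bounded inverse (Proposition~\ref{prp:equs}), convergence of $L_Cf$ in $H$ is equivalent to convergence of $Sf$ in $H$. Thus it suffices to show that $\{f_n\}$ is an ordinary Bessel sequence: once $\sum_n|\langle f,f_n\rangle|^2\le B'\|f\|^2$ is in hand, the synthesis series converges unconditionally and $L_Cf=C(Sf)$ follows from continuity of $C$.

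To upgrade the controlled Bessel bound to an ordinary Bessel bound I would invoke the spectral estimate $mI\le C\le MI$ afforded by $C\in GL^+(H)$ (Proposition~\ref{prp:equs}), rewrite $\langle f,Cf_n\rangle=\langle C^{1/2}f,C^{1/2}f_n\rangle$, substitute $f\mapsto C^{-1/2}f$ in the hypothesis, and then apply Cauchy--Schwarz in $\ell^2$, using that $\|C^{-1/2}\|\le m^{-1/2}<\infty$. The anticipated main obstacle is that the controlled form $\sum_n\langle f,f_n\rangle\langle f,Cf_n\rangle$ is not manifestly a sum of non-negative terms, so one must exploit the self-adjointness of $C^{1/2}$ and its commutativity with $C$ to re-express the sum in symmetric form before the desired bound $\sum_n|\langle f,f_n\rangle|^2\le B'\|f\|^2$ can be extracted.
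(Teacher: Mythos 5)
Your reverse direction is indeed immediate, and your reduction of well-definedness of $L_C$ to the ordinary Bessel property via $L_Cf=C(Sf)$ and invertibility of $C$ is sound: the partial sums of the $L_C$-series are $C$ applied to the partial sums of the $S$-series, so one converges iff the other does. You have correctly located where the content would lie. For comparison, the paper supplies no argument at all --- it declares this lemma straightforward --- and under the paper's own definition a controlled ($K$-)Bessel sequence is already assumed to be a ($K$-)Bessel sequence, so $S$ is bounded and $L_C=CS$ is trivially well defined; your proposal addresses the stronger reading in which only the scalar bound is hypothesized.

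On that stronger reading, however, your key step fails as described. Substituting $f\mapsto C^{-1/2}g$ turns the hypothesis into $\sum_n\langle g,C^{-1/2}f_n\rangle\langle g,C^{1/2}f_n\rangle\le B\|C^{-1/2}\|^2\|g\|^2$, a mixed bilinear sum, and Cauchy--Schwarz in $\ell^2$ can only bound this sum \emph{from above} by $\left(\sum_n|\langle g,C^{-1/2}f_n\rangle|^2\right)^{1/2}\left(\sum_n|\langle g,C^{1/2}f_n\rangle|^2\right)^{1/2}$ --- that is, it estimates the quantity you already control by quantities you do not, whereas you need the reverse: a lower bound of the controlled sum by $c\sum_n|\langle f,f_n\rangle|^2$. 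No termwise comparison can supply this, since $\langle g,C^{1/2}f_n\rangle$ may vanish while $\langle g,f_n\rangle\ne0$, and the spectral bounds $mI\le C\le MI$ cannot rescue a term-by-term argument precisely because the individual terms $\langle f,f_n\rangle\langle f,Cf_n\rangle$ need not be non-negative, as you yourself observe. The standard repair is operator-theoretic rather than pointwise: set $S_Nf=\sum_{n\le N}\langle f,f_n\rangle f_n$ and $T_N=CS_N$; the hypothesis says $\langle T_Nf,f\rangle$ converges for every $f$, so by the polarization identity $\sup_N|\langle T_Nf,g\rangle|<\infty$ for all $f,g$, whence $\{T_Nf\}$ is weakly, hence norm, bounded for each $f$, and Banach--Steinhaus gives $M:=\sup_N\|T_N\|<\infty$. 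Then $\sum_{n\le N}|\langle f,f_n\rangle|^2=\langle S_Nf,f\rangle=\langle C^{-1}T_Nf,f\rangle\le\|C^{-1}\|M\|f\|^2$, which is exactly the Bessel bound your plan requires; after that, your $L_C=CS$ reduction closes the proof.
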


\begin{rem}
The operator $L_C:H\to H\ ,~L_Cf=\sum_{n=1}^{\infty}\langle f,f_n\rangle Cf_n, f\in H$ is called the $C$-controlled Bessel sequence operator, also $L_Cf=CSf$.
\end{rem}
The following lemma characterizes  $C$-controlled \kf\ in term of their operators.
\begin{lem}\label{lm:25}
Let $\{f_n\}_{n=1}^\infty$ be a C-controlled $K$-frame in $H$, for $C\in GL^+(H)$. Then
\[AI\|C^{\frac{1}{2}}K^\dagger\|^2\le L_C\le BI.\]
\end{lem}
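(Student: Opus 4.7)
The plan is to split the lemma into its two operator inequalities after first reconciling $L_C$ with the central expression in the defining chain
\[
A\|C^{1/2}K^*f\|^2\le \sum_{n=1}^\infty\langle f,f_n\rangle\langle f,Cf_n\rangle\le B\|f\|^2.
\]
Because this chain is an inequality between real numbers, the middle sum is real; a one-line expansion shows that it is the complex conjugate of $\langle L_C f,f\rangle=\sum_{n}\langle f,f_n\rangle\langle Cf_n,f\rangle$, so the two quantities agree for every $f\in H$, and in particular $L_C$ is self-adjoint.

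The upper bound $L_C\le BI$ is then immediate from the right half of the chain. For the lower bound I would start from $\langle L_C f,f\rangle\ge A\|C^{1/2}K^*f\|^2$ and estimate the right side below by a multiple of $\|f\|^2$ on $R(K)$. The commutation hypothesis $CK=KC$ together with the spectral theorem gives $C^{1/2}K=KC^{1/2}$ and, on taking adjoints, $C^{1/2}K^*=K^*C^{1/2}$; a short argument from the defining properties of $K^\dagger$ (with $KK^\dagger=P_{R(K)}$ and the range/kernel conditions) extends this to $C^{1/2}K^\dagger=K^\dagger C^{1/2}$. Combining the pseudo-inverse identity $(K^\dagger)^*K^*=P_{R(K)}$ with these commutations and using that $R(K)$ is $C$-invariant, I obtain $(C^{1/2}K^\dagger)^*(C^{1/2}K^*)f=Cf$ for all $f\in R(K)$. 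Taking norms gives $\|Cf\|\le\|C^{1/2}K^\dagger\|\,\|C^{1/2}K^*f\|$, which together with the elementary bound $\|Cf\|\ge\|C^{-1}\|^{-1}\|f\|$ produces an estimate of the form $\|C^{1/2}K^*f\|^2\ge c\,\|C^{1/2}K^\dagger\|^{-2}\|f\|^2$ on $R(K)$; feeding this back into the frame inequality yields the desired lower operator bound, in the spirit of the estimate $A\|K^\dagger\|^{-2}I\le S$ recorded in the remark preceding the definition.

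The main obstacle I anticipate is bookkeeping rather than any deep computation: one needs to verify explicitly that $C^{1/2}$ commutes with $K^\dagger$ (not only with $K$), and to keep track of the fact that the lower estimate is only available on $R(K)$, since the pseudo-inverse identities break outside that subspace. Once those points are settled, the remaining inequalities are one-line consequences of the composite identity on $R(K)$ combined with $\|Tf\|\le\|T\|\,\|f\|$.
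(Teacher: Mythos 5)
Your argument is sound and takes a genuinely different---and more careful---route than the paper's own proof. The paper's proof is two lines: it rewrites the defining inequality as $A\|C^{\frac{1}{2}}K^*f\|^2\le\langle f,L_Cf\rangle\le B\|f\|^2$ and then asserts ``i.e.\ $A\|C^{\frac{1}{2}}K^*\|^2I\le L_C\le BI$.'' The upper bound is fine, but the lower step is a non sequitur: the operator norm only gives $\|C^{\frac{1}{2}}K^*f\|\le\|C^{\frac{1}{2}}K^*\|\,\|f\|$, the reverse of what that inference needs, and $K^\dagger$, which appears in the statement, never enters the paper's proof at all. Your route supplies exactly the missing ingredient. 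The commutation claim you flag as the main obstacle does go through: from $CK=KC$ (and hence $CK^*=K^*C$), the operator $C$ and its inverse preserve $R(K)$ and $N(K)$, hence by self-adjointness also their orthogonal complements, and checking separately on $R(K)$ and $R(K)^\perp$ gives $CK^\dagger=K^\dagger C$; then $C^{\frac{1}{2}}K^\dagger=K^\dagger C^{\frac{1}{2}}$ by the square-root commutation fact quoted in the paper's introduction. With $(K^\dagger)^*K^*=(KK^\dagger)^*=P_{R(K)}$ this yields your identity $(C^{\frac{1}{2}}K^\dagger)^*(C^{\frac{1}{2}}K^*)f=Cf$ on $R(K)$, whence $\|C^{-1}\|^{-1}\|f\|\le\|Cf\|\le\|C^{\frac{1}{2}}K^\dagger\|\,\|C^{\frac{1}{2}}K^*f\|$, and the lower frame inequality converts this into a lower operator bound on $R(K)$, exactly the controlled analogue of the Section~1 remark $\langle Sf,f\rangle\ge A\|K^\dagger\|^{-2}\|f\|^2$ on $R(K)$.

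One point deserves emphasis: what you prove is $A\|C^{-1}\|^{-2}\|C^{\frac{1}{2}}K^\dagger\|^{-2}\|f\|^2\le\langle L_Cf,f\rangle$ \emph{for $f\in R(K)$ only}, with exponent $-2$ on the norm---not the lemma as printed, which asserts $A\|C^{\frac{1}{2}}K^\dagger\|^{2}I\le L_C$ on all of $H$. The printed statement is false in general: if $R(K)\ne H$, any nonzero $f\in R(K)^\perp=N(K^*)$ satisfies $K^*f=0$, so the frame condition imposes no lower bound on $\langle L_Cf,f\rangle$ whatsoever. So the two items you dismiss as ``bookkeeping''---the commutation with $K^\dagger$ and the restriction to $R(K)$---are precisely where both the paper's statement and its proof go wrong, and your corrected version is the one that should stand. (A minor remark on conjugations: with the paper's loose conventions the middle sum is best read as $\langle L_Cf,f\rangle$ itself, which is how the paper's own proof treats it, so the self-adjointness point you make at the outset is the intended reading.)
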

\begin{proof}
Suppose that  $\{f_n\}_{n=1}^\infty$ is a $C$-controlled $K$-frame with bounds $A$ and $B$. Then
\[A\|C^{\frac{1}{2}}K^*f\|^2\le\sum_{n=1}^{\infty}\langle f,f_n\rangle\langle f,Cf_n\rangle\le B\|f\|^2\ ,~\forall f\in H.\]
For $f \in H$
\[A\|C^{\frac{1}{2}}K^*f\|^2\le\langle f,L_Cf\rangle\le B\|f\|^2\]
i.e.
\[A\|C^{\frac{1}{2}}K^*\|^2I\le L_C\le BI.\]
\end{proof}
The following proposition shows that for evaluation a family $\{f_n\}_{n=1}^\infty\subset H$ to be a controlled $K$-frame it is suffices to check just a simple operator inequality.
\begin{prop}
Let $\{f_n\}_{n=1}^\infty$ be a Bessel sequence in $H$ and $C\in GL^+(H)$. Then  $\{f_n\}_{n=1}^\infty$ is a $C$-controlled $K$-frame for $H$ if and only if there exists $A>0$ such that $CS\ge CAKK^*.$
\end{prop}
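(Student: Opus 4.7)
The plan is to prove the equivalence by direct manipulation of the defining inequality of a $C$-controlled $K$-frame, exploiting the self-adjointness of $C\in GL^+(H)$ together with the standing commutation hypothesis $CK=KC$ (built into the definition of a $C$-controlled $K$-frame).

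For the forward direction, suppose $\{f_n\}_{n=1}^\infty$ is a $C$-controlled $K$-frame with lower bound $A$. First I would identify the middle sum with a quadratic form of $L_C=CS$: using the formula $L_Cf=CSf=\sum\langle f,f_n\rangle Cf_n$ from Remark 2.4, one has $\sum\langle f,f_n\rangle\langle f,Cf_n\rangle=\langle CSf,f\rangle$. Next, simplify the left-hand side of the $C$-controlled $K$-frame inequality via
\[
\|C^{1/2}K^*f\|^2=\langle C^{1/2}K^*f,C^{1/2}K^*f\rangle=\langle CK^*f,K^*f\rangle=\langle KCK^*f,f\rangle,
\]
and apply $KC=CK$ to rewrite this as $\langle CKK^*f,f\rangle$. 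The defining inequality then becomes $\langle ACKK^*f,f\rangle\le\langle CSf,f\rangle$ for every $f\in H$, i.e., $CS\ge ACKK^*$.

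For the converse, assume $\{f_n\}_{n=1}^\infty$ is Bessel and $CS\ge ACKK^*$. Running the same chain of identities in reverse gives
\[
A\|C^{1/2}K^*f\|^2=\langle ACKK^*f,f\rangle\le\langle CSf,f\rangle=\sum_{n=1}^{\infty}\langle f,f_n\rangle\langle f,Cf_n\rangle,
\]
which supplies the required lower $C$-controlled $K$-frame bound. The upper bound is then free from the Bessel hypothesis: since $\{f_n\}$ is Bessel, the frame operator $S$ is bounded, hence so is $CS$, and $|\langle CSf,f\rangle|\le\|CS\|\,\|f\|^2\le\|C\|\,\|S\|\,\|f\|^2$ gives the desired upper bound (so the $C$-controlled $K$-Bessel requirement in the definition is automatic).

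The only real content is the identity $\|C^{1/2}K^*f\|^2=\langle CKK^*f,f\rangle$; this is the step where the commutativity $CK=KC$ is essential, since without it one is left with $\langle KCK^*f,f\rangle$ and the clean operator inequality $CS\ge ACKK^*$ is not available. Everything else is a mechanical translation between quadratic forms and operator inequalities, together with the observation that a Bessel sequence composed with the bounded operator $C$ gives the upper bound for free. I do not anticipate any technical obstacle beyond keeping the two bounds straight and invoking the commutation at the correct moment.
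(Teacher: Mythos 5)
Your proposal is correct and takes essentially the same approach as the paper: the paper's (two-line) proof likewise translates the defining inequality into $\langle ACKK^{*}f,f\rangle \le \langle CSf,f\rangle \le B\|f\|^{2}$ for all $f\in H$, using the self-adjointness of $C^{1/2}$ and the commutation $CK=KC$ exactly as you do. If anything, your write-up is more careful than the paper's, which asserts the equivalence without spelling out the identity $\|C^{1/2}K^{*}f\|^{2}=\langle CKK^{*}f,f\rangle$ or noting that the Bessel hypothesis is what supplies the upper bound in the converse direction.
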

\begin{proof} The sequence
$\{f_n\}_{n=1}^\infty$ is a controlled $K$-frame for $H$ with frame bounds $A,B$ and frame operator $S$, if and only if
\[A\|C^{\frac{1}{2}}K^*f\|^2\le\sum_{n=1}^{\infty}\langle f,f_n\rangle\langle f,Cf_n\rangle\le B\|f\|^2\ ,~\forall f\in H.\]
That is,
\[\langle CAKK^*f,f\rangle\le\langle CSf,f\rangle\le\langle Bf,f\rangle, ~\forall f\in H.\]

\end{proof}

\begin{prop}
Let $\{f_n\}_{n=1}^\infty$ be a $C$-controlled $K$-frame and $C\in GL^+(H)$. Then  $\{f_n\}_{n=1}^\infty$ is a $K$-frame for $H$.
\end{prop}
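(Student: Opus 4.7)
The plan is to verify the two $K$-frame inequalities for $\{f_n\}$. The upper bound is immediate, since the definition of a $C$-controlled $K$-frame already requires $\{f_n\}$ to be a $K$-Bessel sequence, so there is nothing to do on that side.

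For the lower bound I would invoke the preceding proposition to recast the hypothesis as the operator inequality $CS \ge A\,CKK^*$, where $S$ is the frame operator of $\{f_n\}$. Using $C = C^*$ and the standing assumption $CK = KC$ (which by adjunction gives $CK^* = K^*C$, and hence $C^{1/2}$ commutes with $K$ and with $K^*$ by functional calculus), this inequality takes the symmetric form $\langle Sf, Cf\rangle \ge A\,\|K^*C^{1/2}f\|^2$ for every $f \in H$.

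The central step is the substitution $f = C^{-1/2}g$, which is legal because $C \in GL^+(H)$. The right side becomes $A\|K^*g\|^2$, and the left side becomes $\langle C^{1/2}SC^{-1/2}g, g\rangle$. Since the right side is a nonnegative real number for each $g$, the quadratic form on the left is real for every $g$, which forces $C^{1/2}SC^{-1/2}$ to be self-adjoint on the complex Hilbert space $H$. Comparing it with its adjoint $C^{-1/2}SC^{1/2}$ (using $S = S^*$) yields $CS = SC$. With $C$ and $S$ commuting, $C^{1/2}$ also commutes with $S$, so $C^{1/2}SC^{-1/2} = S$, and the inequality reduces to $\sum_{n=1}^{\infty}|\langle g, f_n\rangle|^2 = \langle Sg, g\rangle \ge A\,\|K^*g\|^2$, which is precisely the lower $K$-frame bound.

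The main obstacle is exactly the passage from $CS \ge A\,CKK^*$ to $S \ge A\,KK^*$: because $C$ is not assumed a priori to commute with $S$, one cannot literally cancel $C$ from the operator inequality. The substitution $f = C^{-1/2}g$, combined with the observation that the resulting quadratic form must be real-valued, is what recovers the hidden commutativity $CS = SC$ and lets the $C$ disappear cleanly.
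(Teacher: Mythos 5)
Your proof is correct, but it follows a genuinely different route from the paper's, and in fact a more careful one. The paper never states the commutativity $CS=SC$: for the lower bound it inserts $C^{\frac{1}{2}}C^{-\frac{1}{2}}$ in front of $K^*f$ and then passes from $A\|C^{-\frac{1}{2}}K^*f\|^2$ to $\sum_{n}\langle f,f_n\rangle\langle f,C^0f_n\rangle$, a step that silently replaces the control operator $C$ by $C^0=I$ and is not justified by the definition alone; for the upper bound it writes $S^{\frac{1}{2}}=(C^{-1}CS)^{\frac{1}{2}}$ and splits it as $C^{-\frac{1}{2}}(CS)^{\frac{1}{2}}$, which again presupposes both that $CS\ge 0$ and that $C$ commutes with $S$. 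Your argument supplies exactly the ingredient the paper uses tacitly: the hidden identity $CS=SC$, extracted from self-adjointness of the controlled frame operator, after which the cancellation of $C$ via the substitution $f=C^{-\frac{1}{2}}g$ is exact. This buys you sharper constants as well: you recover the lower bound with the original constant $A$ and the upper bound straight from the $K$-Bessel requirement in the definition, whereas the paper's norm-insertion method pays the factors $\|C^{\frac{1}{2}}\|^{-2}$ and $\|C^{-\frac{1}{2}}\|^2$. Two small repairs to your write-up: first, the realness of $\langle C^{\frac{1}{2}}SC^{-\frac{1}{2}}g,g\rangle$ does not follow from the right-hand side being real (an inequality $a\le z$ with $a\in\RR$ does not force $z\in\RR$); rather, realness of the middle term is part of what the definition's two-sided inequality asserts, and it transfers to your form because $\langle C^{\frac{1}{2}}SC^{-\frac{1}{2}}g,g\rangle=\langle CSf,f\rangle$ with $f=C^{-\frac{1}{2}}g$ and $C^{-\frac{1}{2}}$ surjective. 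Second, the polarization step deducing self-adjointness from a real-valued quadratic form requires $H$ to be a complex Hilbert space; over the reals every such form is real and the self-adjointness of $CS$ would need a separate justification. With these phrasing fixes your argument is complete and, unlike the paper's, every step is legitimate as stated.
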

\begin{proof}
Suppose that $\{f_n\}_{n=1}^\infty$ is a controlled $K$-frame  with bounds $A$ and $B$. Then for any $f\in H$
\begin{eqnarray*}
A\|K^*f\|^2 & = & A\|C^{-\frac{1}{2}}C^{\frac{1}{2}}K^*f\|^2\\
 & \le &  A\|C^{\frac{1}{2}}\|^2\|C^{-\frac{1}{2}}K^*f\|^2\\
 & \le & \|C^{\frac{1}{2}}\|^2\sum_{n=1}^{\infty}\langle f,f_n\rangle\langle f,C^0f_n\rangle\\
  & = &\|C^{\frac{1}{2}}\|^2\sum_{n=1}^{\infty}|\langle f,f_n\rangle|^2.
\end{eqnarray*}
Hence for $f\in H$,
\[A\|C^{\frac{1}{2}}\|^{-2}\|K^*f\|^2\le \sum_{n=1}^{\infty}|\langle f,f_n\rangle|^2\]
On the other hand for every $f\in H$,
\begin{eqnarray*}
\sum_{n=1}^{\infty}|\langle f,f_n\rangle|^2 & = & \langle f,Sf\rangle\\
& = &\langle f,C^{-1}CSf\rangle\\
 & = &  \langle(C^{-1}CS)^{\frac{1}{2}}f,(C^{-1}CS)^{\frac{1}{2}}f\rangle\\
 & = & \|(C^{-1}CS)^{\frac{1}{2}}f\|^2\\
 & \le & \|C^{-\frac{1}{2}}\|^2\|(CS)^{\frac{1}{2}}f\|^2\\
 & = & \|C^{-\frac{1}{2}}\|^2\langle f,CSf\rangle\\
 & \le & \|C^{-\frac{1}{2}}\|^2B\|f\|^2.
\end{eqnarray*}

These inequalities yields that $\{f_n\}_{n=1}^\infty$ is a $K$-frame  with bounds $A\|C^{\frac{1}{2}}\|^{-2}$ and $B\|C^{-\frac{1}{2}}\|^2$.
\end{proof}
\begin{prop}
Let $C\in GL^+(H)$ be a self adjoint  and $KC=CK$, if $\{f_n\}_{n=1}^\infty$ is $K$-frame for $H$, then $\{f_n\}_{n=1}^\infty$ is a $C$-controlled K-frame for $H$.
\end{prop}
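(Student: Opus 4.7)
The plan is to translate the $K$-frame hypothesis, which reads $A\|K^*f\|^2 \le \sum_n |\langle f,f_n\rangle|^2 \le B\|f\|^2$ or equivalently $AKK^* \le S \le BI$ for the frame operator $S$ of $\{f_n\}$, into the $C$-controlled $K$-frame inequality by conjugating with $C^{1/2}$. The bridge between the two definitions is the identity
$$\sum_{n=1}^{\infty}\langle f,f_n\rangle\langle f,Cf_n\rangle \;=\; \langle L_C f, f\rangle \;=\; \langle CSf, f\rangle \;=\; \langle Sf, Cf\rangle,$$
which uses only that $C$ is self-adjoint.

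For the upper bound I would apply Cauchy--Schwarz to the last expression: $|\langle Sf, Cf\rangle| \le \|Sf\|\,\|Cf\| \le \|S\|\,\|C\|\,\|f\|^2 \le B\|C\|\,\|f\|^2$, which yields the $C$-controlled upper bound $B\|C\|$ at once.

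For the lower bound, the key is to exploit $CK=KC$. Since $C$ is self-adjoint, taking adjoints gives $CK^* = K^*C$, so $C$ commutes with $KK^*$; and because $C^{1/2}$ commutes with every operator that commutes with $C$ (the standard square-root fact recalled in the preliminaries), $C^{1/2}$ also commutes with $KK^*$. Sandwiching $AKK^* \le S$ by $C^{1/2}$ on both sides, and collapsing the left via this commutation, gives
$$A\,CKK^* \;=\; A\,C^{1/2}KK^*C^{1/2} \;\le\; C^{1/2}SC^{1/2}.$$
Pairing with $f$ and using $\langle CKK^*f, f\rangle = \|C^{1/2}K^*f\|^2$ yields
$$A\,\|C^{1/2}K^*f\|^2 \;\le\; \langle C^{1/2}SC^{1/2}f, f\rangle.$$
To finish I would identify the right-hand side with $\langle CSf, f\rangle = \sum_n \langle f,f_n\rangle\langle f,Cf_n\rangle$, obtaining the lower $C$-controlled $K$-frame bound $A$.

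The main obstacle I expect is precisely this last identification, since $\langle C^{1/2}SC^{1/2}f, f\rangle$ coincides with $\langle CSf, f\rangle$ only when $C$ and $S$ commute; this commutativity is not granted by the stated hypotheses, but it is the same tacit convention the paper uses in earlier arguments of the section (for example when expressions such as $(CS)^{1/2}$ are formed and manipulated), and I would invoke it here in the same spirit. The resulting conclusion is that $\{f_n\}_{n=1}^\infty$ is a $C$-controlled $K$-frame for $H$ with bounds $A$ and $B\|C\|$.
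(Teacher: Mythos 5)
Your proposal is essentially the paper's own argument in operator form: the paper applies the $K$-frame inequality to the vector $C^{1/2}f$, uses $KC=CK$ together with the square-root commutation fact to write $A'\|C^{1/2}K^*f\|^2 = A'\|K^*C^{1/2}f\|^2 \le \langle S C^{1/2}f, C^{1/2}f\rangle$, and gets the upper bound $B'\|C\|$ by the same Cauchy--Schwarz estimate $|\langle f, CSf\rangle| \le \|Cf\|\,\|Sf\|$ --- which is precisely your conjugation of $AKK^* \le S$ by $C^{1/2}$ in scalar form. The obstacle you flag is not yours alone but is the paper's as well: at the corresponding step it silently equates $\sum_{n}\langle f_n, C^{1/2}f\rangle f_n$ with $C^{1/2}Sf$, i.e.\ assumes $SC^{1/2}=C^{1/2}S$, a commutation the stated hypotheses do not provide, so your identification $\langle C^{1/2}SC^{1/2}f, f\rangle = \langle CSf, f\rangle$ is exactly as (un)justified as the published proof.
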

\begin{proof}
Suppose that $\{f_n\}_{n=1}^\infty$ be a $K$-frame with bounds $A'$ and $B'$. Then for all $f\in H$
\[A'\|K^*f\|^2\le\sum_{n=1}^{\infty}|\langle f,f_n\rangle|^2\le B'\|f\|^2.\]
\begin{eqnarray*}
A'\|C^{\frac{1}{2}}K^*f\|^2 = A'\|K^*C^{\frac{1}{2}}f\|^2 & \le &
\sum_{n=1}^{\infty}\langle C^{\frac{1}{2}}f,f_n\rangle\langle C^{\frac{1}{2}}f,f_n\rangle\\
 & = & \langle C^{\frac{1}{2}}f,\sum_{n=1}^{\infty}\langle f_n,C^{\frac{1}{2}}f\rangle f_n\rangle\\
 & = & \langle C^{\frac{1}{2}}f, C^{\frac{1}{2}}Sf\rangle = \langle f,CSf\rangle.
 \end{eqnarray*}
Hence $A'\|C^{\frac{1}{2}}K^*f\|^2\le\langle f,CSf\rangle$ for every $f\in H $. On the other hand for every  $f\in H$,
\[|\langle f,CSf\rangle|^2=|\langle C^*f,Sf\rangle|^2=|\langle Cf,Sf\rangle|^2\le\|Cf\|^2\|Sf\|^2\le\|C\|^2\|f\|^2B\|f\|^2.\]
Hence \[A'\|C^{\frac{1}{2}}K^*f\|^2\le\langle f,CSf\rangle\le B'\|C\|\|f\|^2.\] \\
Therefore $\{f_n\}_{n=1}^\infty$ is a $C$-controlled $K$-frame with bounds $A'$ and $B'\|C\|$.
\end{proof}

\section{Compact Perturbation for Controlled $K$-frames}
One of the most important problems in the studying of frames and its applications specially on wavelet and Gabor systems is the invariance of these systems under perturbation. At the first, the problem of perturbation studied by Paley and Wiener for bases and then extended to frames.There are many versions of perturbation of frames in Hilbert spaces, Banach space, Hilbert $C^*$-modules and etc.  In the last decade, several authors have
generalized the Paley–-Wiener perturbation theorem to the perturbation of frames in Hilbert spaces. The most general result of these was the following obtained by Casazza and Christensen \cite{ole1}.

\begin{thm}\cite{ole1}
 Let $\{x_j\}_{j\in J}$ be a frame for a Hilbert space $ H$ with frame bounds $C$ and $D$. Assume that
$\{y_j\}_{j\in J}$ is a sequence of $H$ and that there exist $\lambda_1,\lambda_2,\mu>0$ such that $max\{\lambda_1+\frac{\mu}{\sqrt{C}},\lambda_2\}<1$.
 Suppose one of the following conditions holds for any finite scalar sequence $\{c_j\}$ and every $x\in H$. Then $\{y_j\}_{j\in J}$ is
also a frame for $H$.
\begin{enumerate}
\item $(\sum_{j\in J} |\langle x,x_j-y_j\rangle|^2)^\frac{1}{2} \leq\lambda_1 (\sum_{j\in J} |\langle x,x_j\rangle|^2)^\frac{1}{2} +\lambda_2(\sum_{j\in J} |\langle x,y_j\rangle|^2)^\frac{1}{2}  +\mu\|x\|$                                                \item $ \|\sum_{i=1}^{n}c_j(x_j-y_j)\|\leq \lambda_1\|\sum_{i=1}^{n}c_j x_j\|          +\lambda_2\|\sum_{i=1}^{n}c_j y_j\|+\mu(\sum_{i=1}^{n}|c_j|^2)^{\frac{1}{2}}       $
\end{enumerate}

Moreover, if $\{x_j\}_{j\in J}$ is a Riesz basis for $H$ and $\{y_j\}_{j\in J}$ satisfies $(2)$, then $\{y_j\}_{j\in J}$ is also a Riesz basis for $H$.
\end{thm}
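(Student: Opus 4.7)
My plan is to treat the two hypotheses separately and then address the Riesz basis statement as a refinement of the synthesis-side argument. For condition (1), the idea is to work entirely with the analysis coefficients in $\ell^2$ and exploit the $\ell^2$-triangle inequality together with the frame bounds for $\{x_j\}$. For the upper bound I would write
\[
\Bigl(\sum_j |\langle x,y_j\rangle|^2\Bigr)^{1/2}
\leq \Bigl(\sum_j |\langle x,x_j\rangle|^2\Bigr)^{1/2}
+ \Bigl(\sum_j |\langle x,x_j-y_j\rangle|^2\Bigr)^{1/2},
\]
insert the estimate (1) on the right-most term, substitute the Bessel bound $\sqrt{D}\|x\|$ for the $x_j$-coefficients, and solve the resulting linear inequality for the $y_j$-coefficients using $\lambda_2<1$. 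For the lower bound I would start instead from the reverse triangle inequality
\[
\sqrt{C}\,\|x\|\leq\Bigl(\sum_j |\langle x,x_j\rangle|^2\Bigr)^{1/2}
\leq \Bigl(\sum_j |\langle x,x_j-y_j\rangle|^2\Bigr)^{1/2}+\Bigl(\sum_j |\langle x,y_j\rangle|^2\Bigr)^{1/2},
\]
apply (1), and isolate the $y_j$-coefficients; the hypothesis $\lambda_1+\mu/\sqrt{C}<1$ is exactly what makes the resulting coefficient on $\|x\|$ strictly positive and yields the lower frame bound.

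For condition (2), I would transfer the same idea to the synthesis side. Let $T_x,T_y\colon\ell^2\to H$ be the synthesis operators of $\{x_j\},\{y_j\}$. The upper Bessel bound for $\{y_j\}$ follows from $\|T_y c\|\leq \|T_x c\|+\|(T_x-T_y)c\|$ together with (2) and $\|T_x\|\leq\sqrt{D}$, rearranged using $\lambda_2<1$. The crux is the lower bound, and here the plan is to use the canonical dual $\tilde{x}_j=S_x^{-1}x_j$ to define
\[
V\colon H\to H,\qquad Vx=\sum_j\langle x,\tilde{x}_j\rangle\,y_j,
\]
so that $x-Vx=\sum_j\langle x,\tilde{x}_j\rangle(x_j-y_j)$. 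Applying (2) with $c_j=\langle x,\tilde{x}_j\rangle$ and using the dual-frame Bessel bound $\|c\|_{\ell^2}\leq\|x\|/\sqrt{C}$ gives a contraction-type estimate of the form $\|x-Vx\|\leq\alpha\|x\|+\lambda_2\|Vx\|$ with $\alpha=\lambda_1+\mu/\sqrt{C}$. Under the hypothesis $\max\{\alpha,\lambda_2\}<1$ one can conclude that $V$ is an invertible operator on $H$, so $T_y$ is surjective; combined with the already established upper bound this produces the lower frame bound for $\{y_j\}$.

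For the Riesz-basis addendum, I would note that $\{x_j\}$ being Riesz means $T_x\colon\ell^2\to H$ is an isomorphism. Using (2) one shows $\|T_x-T_y\|\leq\lambda_1\|T_x\|+\lambda_2\|T_y\|+\mu$ on finite sequences, then extends by density. Writing $T_y=T_x(I-T_x^{-1}(T_x-T_y))$, one estimates the norm of $T_x^{-1}(T_x-T_y)$ and invokes the assumption to show it is strictly less than $1$, so $T_y$ is invertible by a Neumann-series argument and $\{y_j\}$ is Riesz.

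The main obstacle is the lower frame bound in part (2): the naive triangle inequality gives a coefficient of the form $(\lambda_1+2\lambda_2+\mu/\sqrt{C})/(1-\lambda_2)$, which is too large. The key trick is to realize that surjectivity of $T_y$ (equivalently, invertibility of the auxiliary operator $V$ built from the canonical dual) is the right substitute for a direct lower bound estimate, and this is where the separated hypotheses $\lambda_1+\mu/\sqrt{C}<1$ and $\lambda_2<1$ combine cleanly rather than needing to be added.
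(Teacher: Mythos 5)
The paper itself gives no proof of this theorem --- it is quoted as background from \cite{ole1} --- so your proposal can only be measured against the original Casazza--Christensen argument, and your overall architecture does match it: analysis-side triangle inequalities for condition (1) (correct, provided you absorb the term $\lambda_1(\sum_j|\langle x,x_j\rangle|^2)^{1/2}$ into the left-hand side rather than estimating it by $\lambda_1\sqrt{D}\|x\|$, which would require $\sqrt{C}>\lambda_1\sqrt{D}+\mu$), and for condition (2) the Bessel bound plus the auxiliary operator $Vx=\sum_j\langle x,S^{-1}x_j\rangle y_j$ with the estimate $\|x-Vx\|\le(\lambda_1+\mu/\sqrt{C})\|x\|+\lambda_2\|Vx\|$. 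The genuine gap is the step where you conclude invertibility. From $\|x-Vx\|\le\alpha\|x\|+\lambda_2\|Vx\|$ the best norm estimate on $I-V$ obtained by absorbing is $\|I-V\|\le(\alpha+\lambda_2)/(1-\lambda_2)$, which is \emph{not} less than $1$ under the stated hypothesis $\max\{\alpha,\lambda_2\}<1$ (take $\alpha=\lambda_2=3/4$). So the ``Neumann-series argument'' you invoke fails in general; injectivity and closed range follow from $\|Vx\|\ge\frac{1-\alpha}{1+\lambda_2}\|x\|$, but surjectivity is the hard point. The actual engine of the Casazza--Christensen proof is their operator lemma: if $\|x-Ux\|\le\lambda_1\|x\|+\lambda_2\|Ux\|$ with $\max\{\lambda_1,\lambda_2\}<1$, then $U$ is invertible, proved by a continuity (homotopy) argument along $U_t=(1-t)I+tU$, not by a geometric series. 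You must cite or reprove that lemma; as written, your plan silently assumes its conclusion.

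There is a second, related flaw in the Riesz-basis part: you factor $T_y=T_x\bigl(I-T_x^{-1}(T_x-T_y)\bigr)$ and estimate $T_x^{-1}(T_x-T_y)$, but composing with $T_x^{-1}$ on that side picks up the condition number of $T_x$. Indeed, applying (2) with $\|T_xc\|\le\sqrt{D}\|c\|$, $\|T_yc\|\le\sqrt{D}\|T_x^{-1}T_yc\|$ and $\|T_x^{-1}\|\le 1/\sqrt{C}$ gives
\[
\|c-T_x^{-1}T_yc\|\le\Bigl(\lambda_1\sqrt{D/C}+\tfrac{\mu}{\sqrt{C}}\Bigr)\|c\|
+\lambda_2\sqrt{D/C}\,\|T_x^{-1}T_yc\|,
\]
and since $\sqrt{D/C}\ge 1$ the hypothesis does not force these coefficients below $1$. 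The fix is to compose on the other side: set $W=T_yT_x^{-1}:H\to H$ and apply (2) with $c=T_x^{-1}y$ to get
\[
\|y-Wy\|\le\lambda_1\|y\|+\lambda_2\|Wy\|+\mu\|T_x^{-1}y\|
\le\Bigl(\lambda_1+\tfrac{\mu}{\sqrt{C}}\Bigr)\|y\|+\lambda_2\|Wy\|,
\]
which matches the hypothesis exactly; the operator lemma then makes $W$, hence $T_y=WT_x$, invertible, and $\{y_j\}$ is a Riesz basis. With these two repairs --- the operator lemma in place of the Neumann series, and the right-sided factorization --- your outline becomes the standard proof.
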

Another type of the perturbation of frames is compact perturbation that appeared in the paper \cite{heil} by Christensen and Heil:
\begin{thm}\cite{heil}
Let $\{x_j\}_{j\in J}$ be a frame for a Hilbert space $ H$ and $\{y_j\}_{j\in J}$ be a sequence in $H$. If the operator $$K:\ell^2\to H, K\{c_j\}=\sum c_j(x_j-y_j)  $$ is well-defined compact operator, then $\{y_j\}_{j\in J}$
is a frame sequence.
\end{thm}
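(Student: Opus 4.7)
The plan is to pass to synthesis operators and reduce the result to the classical fact that a bounded-below operator perturbed by a compact operator has closed range. To avoid collision with the symbol $K$ used for the $K$-frame operator throughout the paper, I will write $T$ for the compact operator appearing in the statement.

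Let $U:\ell^2\to H$, $U\{c_j\}=\sum_j c_j x_j$, be the synthesis operator of the frame $\{x_j\}$. Since $\{x_j\}$ has lower frame bound $C>0$, the frame operator satisfies $UU^*\ge CI$, so $U^*$ is bounded below with $\|U^*f\|^2\ge C\|f\|^2$, and $U$ is surjective. Let $V:\ell^2\to H$, $V\{c_j\}=\sum_j c_j y_j$. The hypothesis that $T$ is well defined on all of $\ell^2$ makes $V=U-T$ a bounded operator, which already gives the Bessel estimate
$$\sum_j|\langle f,y_j\rangle|^2=\|V^*f\|^2\le\|V\|^2\|f\|^2.$$
The remaining task is the lower frame bound on $\overline{\operatorname{span}\{y_j\}}=\overline{R(V)}=(\ker V^*)^\perp$, i.e.\ to show that $V^*$ is bounded below on $(\ker V^*)^\perp$, which is equivalent to $V^*$ (and hence $V$) having closed range.

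The decomposition $V^*=U^*-T^*$ exhibits $V^*$ as a bounded-below operator minus a compact one. The crux is then the following lemma: \emph{if $A$ is bounded below and $C$ is compact, then $A-C$ has closed range.} I would prove this by contradiction: were $A-C$ not bounded below on $(\ker(A-C))^\perp$, pick unit vectors $f_n$ in that subspace with $(A-C)f_n\to 0$; by compactness pass to a subsequence with $Cf_{n_k}\to z$, whence $Af_{n_k}\to z$; since $A$ is bounded below, $\{f_{n_k}\}$ is Cauchy and converges to some $f\in(\ker(A-C))^\perp$ with $\|f\|=1$ and $(A-C)f=0$, contradicting $f\in(\ker(A-C))^\perp$.

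Applying the lemma with $A=U^*$ and $C=T^*$ shows that $V^*$ has closed range; then $V^*$ restricted to $(\ker V^*)^\perp$ is a bounded injection onto its closed range, hence bounded below by the open mapping theorem. This yields the lower frame bound for $\{y_j\}$ on $\overline{\operatorname{span}\{y_j\}}$, and $\{y_j\}$ is a frame sequence. The main obstacle is the lemma about ``bounded below plus compact''; once that is secured, the result follows immediately from the synthesis-operator reformulation, with no further estimation needed.
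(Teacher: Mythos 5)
Your proof is correct, but note that the paper offers no proof of this statement at all: it is quoted verbatim from Christensen--Heil \cite{heil} as background, so there is no in-paper argument to compare against line by line. The closest internal analogue is the proof of the paper's final theorem (compact perturbation of controlled $K$-frames), which works on the synthesis side: it sets $V=T_F-E$, obtains the Bessel bound from $\|V\|\le\|T_F\|+\|E\|$, and then tries to establish closed range of $VV^*=S_F+(EE^*-T_FE^*-ET_F^*)$ by factoring out $S_F$ and invoking an ``identity plus compact'' operator --- an argument that, as written, even asserts incorrectly that $T_F$ and $S_F$ are compact. Your route is genuinely different and cleaner: you dualize to the analysis operator, writing $V^*=U^*-T^*$ as (bounded below) minus (compact), and you isolate and prove from scratch the semi-Fredholm stability lemma that a compact perturbation of a bounded-below operator has closed range, via the standard subsequence/Cauchy contradiction argument. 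That lemma is exactly the right tool here: it directly yields the lower bound for $\|V^*f\|$ on $(\ker V^*)^\perp=\overline{\operatorname{span}}\{y_j\}$, which is precisely the frame-sequence condition, with no need for $S_F^{-1}$, no compactness bookkeeping for operator products, and no restriction to the case where the frame operator is invertible. (Equivalently, you could have stayed on the synthesis side and used the dual fact that a compact perturbation of a surjective operator has closed range; the two statements are exchanged by taking adjoints, and the original Christensen--Heil argument is of this flavor.) Two cosmetic points: you use the letter $C$ both for the lower frame bound of $\{x_j\}$ and for the compact operator in your lemma, so rename one of them; and the surjectivity of $U$ is never actually used --- only the lower bound $\|U^*f\|\ge\sqrt{C}\,\|f\|$ enters the argument.
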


%
%At the end of this section, we mainly give an important result on stability of compact perturbation for Controlled $K$-frames.
%
%To do this, at first, several lemmas should be introduced:
%\begin{lem}\label{lm:31}\cite{ole}
%$\{f_n\}_{n=1}^\infty$ is a Bessel sequence with bound $B$ in $H$, if and only if the operator $T: l^2\to H\ ,\ Ta=\sum_{n=1}^{+\infty}a_nf_n$ can be defined, and $\|T\|\le\sqrt{B}$
%\end{lem}
%
%\begin{lem}\cite{ole}
%Let $T_1\in L(X,Y)$ and $T_2:X\to Y$ be linear. if there exist constants $\lambda_1,\lambda_2\in[0,1]$ such that
%\[\|T_1x-T_2x\|\le\lambda_1\|T_1x\|+\lambda_2\|T_2x\|\ ,\ \forall x\in X.\]
%Then $T_2\in L(X,Y)$. Moreover, if $T_1$ is invertible on $X$, then $T_2$ is also invertible on $X$, and we have
%\[\dfrac{1-\lambda_1}{1+\lambda_2}\|T_1x\|\le\|T_2x\|\le \dfrac{1+\lambda_1}{1-\lambda_2}\|T_1x\|\ ,\ \forall x\in X\]
%and
%\[\dfrac{1-\lambda_2}{1+\lambda_1}\dfrac{1}{\|T_1\|}\|y\|\le\|T_2^{-1}y\|\le \dfrac{1+\lambda_2}{1-\lambda_1}\|T_1^{-1}\|\|y\|\ ,\ \forall y\in Y.\]
%\end{lem}
%
%\begin{lem}\label{lm:33} \cite{ole}
%Let $\{f_n\}_{n=1}^\infty$ be a frame with bounds $A,B$, and considering frame operator as $S$. then $S$ is bounded, invertible, self-adjoint, and positive.
%\end{lem}
%
%\begin{lem}\label{lm:34} \cite{ole}
%Let $\{f_k\}_{k\in I}$ be a frame for $H$, with frame bounds $A,B$. If $G=\{g_k\}_{k\in I}$ be a sequence in $H$ and $E=T_F-T_G$ be a Compact operator, then  $G=\{g_k\}_{k\in I}$ is a frame sequence for $H$.
%\end{lem}

The perturbation theorem investigated by  X. Xiao, Y. Zhu, L. G\v{a}vruta to $K$-frames \cite{12}.

\begin{thm} \cite{12}
Suppose that $\{f_n\}_{n=1}^\infty$ is a $K$-frame for $H$, and $\alpha,\beta\in[0,\infty]$, such that $max\{\alpha+\gamma\sqrt{A^{-1}}\|K^+\|,\beta\}<1$.

If $\{g_n\}_{n=1}^\infty\subset H$ and satisfy
\[\|\sum_{k=1}^{n}c_k(f_k-g_k)\|\le\alpha\|\sum_{k=1}^{n}c_kf_k\|+ \beta\|\sum_{k=1}^{n}c_kg_k\|+\gamma(\sum_{k=1}^{n}|c_k|^2)^{\frac{1}{2}},\]
for any $c_i,\ i\in\NN$, then $\{g_n\}_{n=1}^\infty$ is a $P_{Q(R(K))}K$-frame for $H$, with frame bounds
\[\dfrac{[\sqrt{A}\|K^+\|^{-1}(1-\alpha)-\gamma]^2}{(1+\beta)^2\|K\|^2} , \dfrac{[\sqrt{B}(1+\alpha)+\gamma]^2}{(1-\beta)^2},\]

where $P_{Q(R(K))}$ is a orthogonal projection operator for $H$ to $Q(R(K))$, $Q=UT^*$, $T,U$ are synthesis operator for $\{f_n\}_{n=1}^\infty$ and $\{g_n\}_{n=1}^\infty$ respectively.
\end{thm}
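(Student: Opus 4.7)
The plan is to lift the scalar perturbation bound to an operator-level inequality between the synthesis operators $T,U:\ell^2\to H$ defined by $Tc=\sum_n c_nf_n$ and $Uc=\sum_n c_ng_n$, and then combine it with the $K$-frame structure of $\{f_n\}$ on $R(K)$. Extending the hypothesis by continuity and the Bessel bound $\|T\|\le\sqrt{B}$ produces $\|(T-U)c\|\le\alpha\|Tc\|+\beta\|Uc\|+\gamma\|c\|_{\ell^2}$ for every $c\in\ell^2$. Rearranging and using $\|Tc\|\le\sqrt{B}\|c\|_{\ell^2}$ immediately gives $\|U\|\le[(1+\alpha)\sqrt{B}+\gamma]/(1-\beta)$, and since $\sum_n|\langle\xi,g_n\rangle|^2=\|U^*\xi\|^2$ this delivers the upper frame bound stated in the theorem.

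For the lower bound, the opposite rearrangement is $(1+\beta)\|Uc\|\ge(1-\alpha)\|Tc\|-\gamma\|c\|_{\ell^2}$. Substituting $c=T^*f$ and writing $S=TT^*$, $Q=UT^*$ yields $(1+\beta)\|Qf\|\ge(1-\alpha)\|Sf\|-\gamma\|T^*f\|$. For $f\in R(K)$, the remark following Definition~\ref{def:kframe} provides $\|K^*f\|\ge\|K^\dagger\|^{-1}\|f\|$; combined with the $K$-frame inequality $\|T^*f\|^2\ge A\|K^*f\|^2$ and the Cauchy--Schwarz consequence $\|Sf\|\|f\|\ge\langle Sf,f\rangle=\|T^*f\|^2$, this yields $\|Sf\|\ge\sqrt{A}\|K^\dagger\|^{-1}\|T^*f\|$ on $R(K)$. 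Substituting back produces the key estimate
\[\|Qf\|\ge C\,\|T^*f\|\quad(f\in R(K)),\qquad C:=\frac{\sqrt{A}\,\|K^\dagger\|^{-1}(1-\alpha)-\gamma}{1+\beta}>0,\]
with $C>0$ being exactly the hypothesis $\alpha+\gamma\sqrt{A^{-1}}\|K^\dagger\|<1$. In particular $Q|_{R(K)}$ is bounded below, so $Q(R(K))$ is closed and $P:=P_{Q(R(K))}$ is a well-defined orthogonal projection.

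The remaining step converts this into the $PK$-frame lower bound via duality. Given $\xi\in H$ and any unit vector $y\in Q(R(K))$, write $y=Qz$ for the unique $z\in R(K)$; the above estimate forces $\|T^*z\|\le\|Qz\|/C=1/C$, so
\[|\langle\xi,y\rangle|=|\langle\xi,UT^*z\rangle|=|\langle U^*\xi,T^*z\rangle_{\ell^2}|\le\|U^*\xi\|/C.\]
Taking the supremum over unit $y\in Q(R(K))$ yields $\|P\xi\|\le\|U^*\xi\|/C$, and then $\|K^*P\xi\|\le\|K\|\,\|P\xi\|$ gives $\|U^*\xi\|^2\ge(C^2/\|K\|^2)\|K^*P\xi\|^2$. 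Identifying $\|U^*\xi\|^2=\sum_n|\langle\xi,g_n\rangle|^2$ and $K^*P\xi=(PK)^*\xi$ produces the claimed lower frame bound with constant $A'=C^2/\|K\|^2$.

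The main obstacle is this final duality step: promoting a one-sided estimate on $\|Qf\|$ over $R(K)$ to a uniform lower bound on $\|U^*\xi\|$ over all of $H$. It relies crucially on closedness of $Q(R(K))$ (so $P$ exists), on identifying $P_{Q(R(K))}$ as the projection relative to which the $K$-frame inequality must be formulated, and on the appearance of $\|K\|$ through the elementary bound $\|K^*P\xi\|\le\|K\|\,\|P\xi\|$ that absorbs the transition from $\|P\xi\|$ to $\|K^*P\xi\|$.
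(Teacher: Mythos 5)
The paper states this theorem without proof, quoting it from \cite{12}, so there is no in-paper argument to compare against; judged on its own merits, your proof is correct and follows essentially the same route as the cited source: the rearranged perturbation inequality gives $\|U\|\le\bigl[\sqrt{B}(1+\alpha)+\gamma\bigr]/(1-\beta)$ for the upper bound, the estimate $\|Qf\|\ge C\|T^*f\|$ on $R(K)$ (with $C>0$ precisely equivalent to $\alpha+\gamma\sqrt{A^{-1}}\|K^\dagger\|<1$) makes $Q(R(K))$ closed so that $P_{Q(R(K))}$ exists, and the duality step $\|P_{Q(R(K))}\xi\|\le\|U^*\xi\|/C$ combined with $\|K^*P_{Q(R(K))}\xi\|\le\|K\|\,\|P_{Q(R(K))}\xi\|$ delivers exactly the stated lower bound $C^2/\|K\|^2$. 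Your flagged obstacle is handled correctly: the passage from the one-sided estimate on $R(K)$ to a global bound on $\|U^*\xi\|$ is legitimate because $Q|_{R(K)}$ is injective (so $z$ is unique) and $\|P\xi\|=\sup\{|\langle\xi,y\rangle| : y\in Q(R(K)),\ \|y\|=1\}$, which is the same mechanism used in \cite{12}.
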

%
%\begin{lem}\label{lm:36}
%Let $\{f_n\}_{n=1}^\infty$ be a Controlled $K$-frame with frame operator $S$. Then $S$ is invertible.
%\end{lem}
%\begin{proof}
%It is provided by using the lemma \ref{lm:33} and lemma \ref{lm:25}.
%\end{proof}
%
%\begin{lem} \cite{ole}
%Let $T\in B(H)$ be a compact operator. Then $T-\lambda I$ is a operator with closed range.
%\end{lem}
%
%\begin{lem}
%Let $\{f_n\}_{n=1}^\infty$ be a sequence in $H$ and $T$ be a synthesis operator for $\{f_n\}_{n=1}^\infty$. Then Bessel sequence $\{f_n\}_{n=1}^\infty$ is a Controlled $K$-frame for $H$ if and only if $R_T=H$.
%\end{lem}
%\begin{proof}
%Let $\{f_n\}_{n=1}^\infty$ be a Controlled $K$-frame with operator $S$. Using lemma \ref{lm:34}, $S$ is a invertible and surjective. Therefore using $S=TT^*$, the surjective of $T$ is clear.
%\end{proof}
Motivating the above theorems, we prove compact perturbation for controlled $K$-frames.
\begin{thm}
Let $F=\{f_k\}_{k\in I}$ be a controlled $K$-frame for $H$, with operator $S$ and frame bounds $A_F,B_F$.
If $G=\{g_k\}_{k\in I}$ is a sequence  in $H$ and $E=T_F-T_G$ be a compact operator, where $T_G\{c_k\}_{k\in I}=\sum_{k\in I}c_k g_k$ for $\{c_k\}_{k\in I}\in\ell^2 $,  then $G=\{g_k\}_{k\in I}$ is a controlled $K$-frame for $H$.
\end{thm}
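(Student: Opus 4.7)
The plan is to establish the $C$-controlled Bessel (upper) bound and the $C$-controlled $K$-frame lower bound for $G$ separately, with compactness of $E$ being the decisive ingredient for the latter. For the upper bound, since $E$ is compact (hence bounded) and $T_F$ is bounded (because $F$ is a Bessel sequence), the operator $T_G = T_F - E$ is bounded, so $\sum_k \langle f, g_k\rangle \langle f, Cg_k\rangle = \langle CS_G f, f\rangle \le \|C\|\|S_G\|\|f\|^2$, which supplies the $C$-controlled Bessel bound.

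For the lower bound I would use the identity
$$S_G = T_G T_G^* = (T_F - E)(T_F^* - E^*) = S_F + W,$$
where $W := -T_F E^* - ET_F^* + EE^*$ is compact (each summand contains a compact factor $E$ or $E^*$). Combining with the $C$-controlled $K$-frame bound for $F$,
$$\langle CS_G f, f\rangle \ge A_F \|C^{1/2}K^* f\|^2 + \langle CW f, f\rangle,$$
so the task reduces to showing that the compact error $\langle CWf, f\rangle$ cannot swallow the principal term. I would argue by contradiction: suppose no $A > 0$ works, producing a sequence $\{f_n\}$ with $\|C^{1/2}K^* f_n\| = 1$ yet $\langle CS_G f_n, f_n\rangle \to 0$. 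Using $K^* = K^* P_{\overline{R(K)}}$ one reduces to $f_n \in \overline{R(K)}$, where $\|K^* f_n\| \le \|C^{-1/2}\|$ combined with the coercivity $\|f_n\| \le \|K^\dagger\| \|K^* f_n\|$ on $\overline{R(K)}$ makes $\{f_n\}$ norm-bounded. Passing to a weakly convergent subsequence $f_n \rightharpoonup f_\infty$, compactness of $E^*$ yields $E^* f_n \to E^* f_\infty$ strongly; a chase through the identity $T_G^* = T_F^* - E^*$ together with the strict positive bound $\langle CS_F f_n, f_n\rangle \ge A_F$ inherited from $F$ then produces the contradiction.

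The main obstacle is closing this last contradiction step cleanly and carefully justifying the reduction to $\overline{R(K)}$. The parallel compact-perturbation result of Christensen and Heil for ordinary frames yields only that $G$ is a frame sequence (a frame for its closed linear span); the $K$-frame setting is more permissive because the frame inequality effectively tests $f$ only through its component in $\overline{R(K)}$, where the coercivity $\|K^* f\| \ge \|K^\dagger\|^{-1}\|f\|$ is genuine and transforms the weak-to-strong convergence of $E^* f_n$ into the needed rigid information. Once the lower bound is in hand, the propositions of the preceding section (establishing the equivalence of $K$-frame and $C$-controlled $K$-frame under $CK = KC$) immediately upgrade it to the controlled form, completing the argument.
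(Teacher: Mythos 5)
Your upper-bound half is fine, but the decisive step of your lower-bound argument --- ``a chase through the identity $T_G^*=T_F^*-E^*$ \dots\ produces the contradiction'' --- is exactly the point that cannot be closed, because the statement in the form you are proving (lower bound on all of $H$, or even on $\overline{R(K)}$) is \emph{false} under mere compactness of $E$. Counterexample: let $H=\ell^2$ with orthonormal basis $\{e_k\}$, $C=I$, $K$ the orthogonal projection onto $\mathrm{span}\{e_1\}$ (closed range, $CK=KC$), $f_1=e_1$ and $f_k=0$ for $k\ge2$, and $g_k=0$ for all $k$. Then $F$ is an $I$-controlled $K$-frame with $A_F=B_F=1$, and $E=T_F$ is rank one, hence compact; yet $\sum_k\langle f,g_k\rangle\langle f,Cg_k\rangle=0$ for every $f$, so no lower bound $A\|C^{1/2}K^*f\|^2$ can hold at $f=e_1$. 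Running your scheme on this example shows why the contradiction never materializes: here $W=EE^*-T_FE^*-ET_F^*=-S_F$, so the compact error \emph{exactly} swallows the principal term --- $\langle CS_Ff_n,f_n\rangle\ge A_F$ is cancelled by $\langle CWf_n,f_n\rangle=-A_F$ --- and the weak-to-strong convergence of $E^*f_n$ yields no rigidity, since nothing forbids $T_G^*f_\infty=0$ while $K^*f_\infty\ne0$. Your guiding intuition that the $K$-frame setting is ``more permissive'' because of the coercivity $\|K^*f\|\ge\|K^\dagger\|^{-1}\|f\|$ on $R(K)$ is precisely what fails: a compact (even finite-rank) $E$ can annihilate the system on $R(K)$ itself. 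A secondary flaw: the reduction to $f_n\in\overline{R(K)}$ is not legitimate, because although $K^*f=K^*P_{\overline{R(K)}}f$, the quantity $\langle CS_Gf,f\rangle$ is not a function of $P_{\overline{R(K)}}f$, so replacing $f_n$ by its projection destroys the hypothesis $\langle CS_Gf_n,f_n\rangle\to0$.

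For comparison: the paper's proof uses your same decomposition $S_G=S_F+EE^*-T_FE^*-ET_F^*$, but instead of seeking a lower bound on $H$ it argues via a Fredholm-type step (compact plus identity has closed range) that $S_G$ has closed range, and in its final line concludes only that $G$ is a controlled $K$-frame for $\overline{\mathrm{span}}\{g_k\}$ --- the analogue of the Christensen--Heil frame-sequence conclusion --- not for $H$ as the theorem asserts. So there is a statement/proof mismatch in the paper itself, and the literal statement you set out to prove is false; you should not feel obliged to repair the contradiction argument, since no argument can. (The paper's own proof is also defective on its weaker claim: it invokes $S_F^{-1}$, although the frame operator of a $K$-frame is generally not invertible on $H$, a fact the paper itself records; it asserts without justification that $T_F$ and $S_F$ are compact; and the injectivity of $VV^*$ is declared ``clear'' though it fails in the example above.) The honest options are to weaken the conclusion (a frame for $\overline{\mathrm{span}}\{g_k\}$, or a $P_{Q(R(K))}K$-frame statement in the spirit of the quoted Xiao--Zhu--G\v{a}vruta perturbation theorem) or to strengthen the hypotheses, e.g.\ by a quantitative smallness condition on $\|E\|$ relative to $\sqrt{A_F}\,\|K^\dagger\|^{-1}$, under which your Neumann-type estimate on $\langle CS_Gf,f\rangle$ does close.
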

\begin{proof}
Let $\{f_k\}_{k\in I}$ be a controlled $K$-frame with bounds $A_F,B_F$, then $\|T_F\|^2\le B_F$.
Let $V=T_F-E$ be an operator from $l^2(I)$ into $H$. Because $T_F$ and $E$ are bounded, then operator $V$ is bounded. Therefore $\|V\|=\|V^*\|$. For any $f\in H$,
\[V^*f=T^*f-E^*f=\{\langle f,f_k\rangle\}_{k\in I}- \{\langle f,f_k-g_k\rangle\}_{k\in I}\]
\[=\{\langle f,f_k\rangle\}_{k\in I}- \{\langle f,f_k\rangle-\langle f,g_k\rangle\}_{k\in I}=\{\langle f,g_k\rangle\}_{k\in I}.\]
Therefore,
\[V(\{c_k\}_{k\in I})=\sum_{k\in I}c_kg_k\ ,\ S_G=VV^*.\]
\[\langle f,CS_Gf\rangle=\langle f,CVV^*f\rangle=\langle C^{\frac{1}{2}}Vf,C^{\frac{1}{2}}Vf\rangle\]
\[=\|C^{\frac{1}{2}}Vf\|^2=\|C^{\frac{1}{2}}\|^2\|Vf\|^2= \|C^{\frac{1}{2}}\|^2\|(T_F-E)f\|^2\]
Therefore,
\begin{eqnarray*}
\langle f,CS_Gf\rangle & \le & \|T_F-E\|^2\|f\|^2\|C^{\frac{1}{2}}\|^2\\
 & \le & (\|T_F\|^2+2\|T_F\|\|E\|+\|E\|^2)\|f\|^2\|C^{\frac{1}{2}}\|^2\\
 & \le & (B_F+2\sqrt{B_F}\|E\|+\|E\|^2)\|f\|^2\|C^{\frac{1}{2}}\|^2\\
 & = & B_F(1+\dfrac{\|E\|}{\sqrt{B_F}})^2\|f\|^2\|C^{\frac{1}{2}}\|^2.
\end{eqnarray*}
This inequality shows that $\{g_k\}_{k\in I}$ is a $K$-Bessel sequence with bound $B_F(1+\dfrac{\|E\|}{\sqrt{B_F}})^2\|C^{\frac{1}{2}}\|^2$.

In the next step, we prove that $S_G=VV^*$ is a surjective operator. We have,
\begin{eqnarray*}
VV^* & = & (T_F-E)(T_F-E)^*=(T_F-E)(T_F^*-E^*)\\
 & = & T_FT_F^*-T_FE^*-ET_F^*+EE^*\\
 & = & S_F+EE^*-T_FE^*-ET_F^*\quad s.t \quad S_F=T_FT_F^*.
\end{eqnarray*}
Since $E$, $T_F$ and $S_F$ are compact operators, then $(EE^*-T_FE^*-ET_F^*)S_F^{-1}$ is a compact operator. Therefore $(EE^*-T_FE^*-ET_F^*)S_F^{-1}+I$ is a bounded operator with closed range. Thus, $VV^*=EE^*-T_FE^*-ET_F^*+S_F$ is a bounded operator with closed range. Therefore $VV^*$ is an operator on $\overline{span}\{g_k\}_{k\in I}$. It is clear that $VV^*$ is a injective. By lemma \ref{111} it can be deduced that $R_{VV^*}=N_{VV^*}^{\dag}=\overline{span}\{g_k\}_{k\in I}$. Then $S_G$ is a surjective operator.
Therefore $G=\{g_k\}_{k\in I}$ is a Controlled $K$-frame for $\overline{span}\{g_k\}_{k\in I}$.
\end{proof}

% Bibliography
%-----------------------------------------------------------------

\end{document}